\documentclass{article}

\usepackage{amsmath}
\usepackage{amsthm}
\usepackage{amsfonts}
\usepackage{amsbsy}
\usepackage{amssymb}
\usepackage[initials]{amsrefs}
\bibliographystyle{amsplain}
\newtheorem{theorem}{Theorem}

\newtheorem{lemma}{Lemma}

\newcommand{\R}{{\mathbb R}}

\newcommand{\set}[2]{ \left\{ #1 \ \left| \ #2 \right. \right\} }

\usepackage{multirow}

\title{$L^p$-nondegenerate Radon-like operators with vanishing rotational curvature}
\author{Philip T. Gressman\footnote{Partially supported by NSF grant DMS-1101393 and an Alfred P. Sloan Foundation Fellowship.}}
\begin{document}
\maketitle

\begin{abstract}
We consider the $L^p \rightarrow L^q$ mapping properties of a model family of Radon-like operators integrating functions over $n$-dimensional submanifolds of $\R^{2n}$.  
It is shown that nonvanishing rotational curvature is never generic when $n \geq 2$ and is, in fact, impossible for all but finitely many values of $n$.  Nevertheless, operators satisfying the same $L^p \rightarrow L^q$ estimates as the ``nondegenerate'' case (modulo the endpoint) are dense in the model family for all $n$. 
\end{abstract}

\section{Main Results}

In this paper, we present a curious fact about Radon-like operators.  When studying mapping properties from $L^2$ into the scale of $L^2$-Sobolev spaces, it has long been known that the correct way to identify ``nondegenerate'' operators is via the nonvanishing of the Phong-Stein rotational curvature \cite{ps1986I, ps1986II} (we will remind the reader of the details shortly). In this paper, we show that this correspondence does not hold true when one instead considers optimal $L^p$-improving estimates. 

The specific class of operators we will consider are averages over certain families of $n$-dimensional submanifolds of $\R^{2n}$. Fix a dimension $n$, and let $Q : \R^n \times \R^n \rightarrow \R^n$ be bilinear; for convenience we define $Q_{ijk}$ so that 
\[ Q(x,y) = \left( \sum_{j,k=1}^n Q_{1jk} x_j y_k, \ldots, \sum_{j,k=1}^n Q_{njk} x_j y_k \right) \]
in the standard basis.
To this $Q$ we may associate an averaging operator acting on functions of $\R^n \times \R^n$ via the formula
\begin{equation} T_Q f (x,x') := \int_{[-1,1]^n} f(x+t, x' + Q(x,t)) dt. \label{mainclass} \end{equation}
Though this operator is only semi-translation-invariant in general, when $Q$ is symmetric, conjugating by  the operator $R_Q$, defined by $R_Q f(x,x') := f(x, x' - \frac{1}{2} Q(x,x))$ (which preserves all $L^p$-norms), gives
 \begin{equation} R_Q^{-1} T_Q R_Q f(x,x') = \int_{[-1,1]^n} f\left(x+t, x' - \frac{1}{2} Q(t,t) \right) dt, \label{convo} \end{equation}
 which is convolution with the standard measure on the quadratic submanifold parametrized by $(t, \frac{1}{2} Q(t,t))$.  Thus, when $Q$ is symmetric, the operator $T_Q$ will satisfy the same $L^p \rightarrow L^q$ estimates as the convolution operator \eqref{convo}.  Similarly, we may effectively consider this class of operators \eqref{mainclass} to be closed under adjoints: for any choice of $Q$, the adjoint operator $T_Q^*$ will satisfy the same $L^p \rightarrow L^q$ estimates as the operator $T_{Q^*}$, where $Q^*(x,t) := Q(t,x)$.
 
The purpose of this paper is to demonstrate the failure of rotational curvature to characterize the ``$L^p$-nondegeneracy'' of these operators.  To that end, there are two goals.  The first is to establish that essentially all operators of the form \eqref{mainclass} satisfy the best-possible range of $L^p \rightarrow L^q$ estimates (up to possibly the $L^{\frac{3}{2}} \rightarrow L^3$ endpoint itself). The second is to show that nonvanishing Phong-Stein rotational curvature is sufficient but \eqref{mainclass} not generally necessary to establish optimal $L^{p} \rightarrow L^q$ estimates.  We summarize these results in the main theorem:
\begin{theorem} 
Fix any bilinear $Q$ as described above.  For each $u \in \R^n$, define
\[ Q_u (x,y) := Q(x,y) + \sum_{j=1}^n u_i x_i y_i. \]
Then for almost every $u \in \R^n$ (and consequently, for a dense set of $u \in \R^n$), the operator $T_{Q_u}$ maps $L^p(\R^{2n})$ to $L^q(\R^{2n})$
whenever $(\frac{1}{p}, \frac{1}{q})$ belongs to the closed triangle with vertices $(0,0), (1,1)$, and $(\frac{2}{3}, \frac{1}{3})$, with the possible exception of the 
the vertex $(\frac{2}{3}, \frac{1}{3})$. The set of $u$ for which $T_{Q_u}$ exhibits nonvanishing rotational curvature will be open in $\R^n$ but never dense when $n \geq 2$, and will in fact be empty unless $n=1,2,4,$ or $8$.  \label{mainthm}
\end{theorem}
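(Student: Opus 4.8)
\emph{Plan for the rotational-curvature assertions.}  The plan is to translate ``nonvanishing rotational curvature'' for $T_{Q_u}$ into an elementary algebraic condition on an $n$-tuple of $n\times n$ real matrices --- namely that it defines a \emph{nonsingular} bilinear map $\R^n\times\R^n\to\R^n$ (one with no nontrivial zeros) --- and then to invoke the classification of such maps.  Step~1 is a direct computation.  The incidence manifold of $T_{Q_u}$ inside $\R^{2n}\times\R^{2n}$ is the common zero set of
\[ \Phi_l\big((x,x'),(y,y')\big) \;=\; y'_l-x'_l-\sum_{j,k=1}^{n}(Q_u)_{ljk}\,x_j(y_k-x_k),\qquad l=1,\dots,n, \]
with $(Q_u)_{ljk}=Q_{ljk}+u_l\,\delta_{jl}\delta_{kl}$.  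Substituting this $\Phi$ into the Phong--Stein rotational-curvature matrix $\mathcal C$ and using that $\partial_{x'_m}\Phi_l=-\delta_{ml}$ and $\partial_{y'_m}\Phi_l=\delta_{ml}$ are constant and invertible, the contributions of $\partial_x\Phi$ and $\partial_y\Phi$ drop out of $\det\mathcal C$, leaving (at every base point and every nonzero Lagrange multiplier $\sigma$)
\[ \det\mathcal C \;=\; \pm\,\det\Big(\textstyle\sum_{l=1}^{n}\sigma_l\,(Q_u)_{ljk}\Big)_{j,k=1}^{n}. \]
Writing $A_l:=\big((Q_u)_{ljk}\big)_{j,k}\in M_n(\R)$ and $p_u(\sigma):=\det\big(\sum_l\sigma_l A_l\big)$ (a real form of degree $n$ on $\R^n$), this shows that \emph{$T_{Q_u}$ has nonvanishing rotational curvature precisely when $p_u(\sigma)\neq0$ for every $\sigma\in\R^n\setminus\{0\}$}, equivalently when $B_u(\sigma,v):=\big(\sum_l\sigma_l A_l\big)v$ is a nonsingular bilinear map.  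The set of ``good'' $u$ is then $\{u:\min_{|\sigma|=1}|p_u(\sigma)|>0\}$, which is open because $u\mapsto\min_{|\sigma|=1}|p_u(\sigma)|$ is continuous.

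For the failure of density when $n\ge2$, I would exhibit an explicit open set of ``bad'' $u$.  The key identity is $\sum_l A_l=\sum_l Q_{l\cdot\cdot}+\mathrm{diag}(u_1,\dots,u_n)$, so on the open cone $U_N=\{u:u_l>N\text{ for all }l\}$ the two matrices $\sum_l\sigma^{+}_l A_l$ and $\sum_l\sigma^{-}_l A_l$, with $\sigma^{+}=n^{-1/2}(1,\dots,1)$ and $\sigma^{-}=n^{-1/2}(1,\dots,1,-1)$, are fixed bounded perturbations of $n^{-1/2}\mathrm{diag}(u_1,\dots,u_n)$ and of $n^{-1/2}\mathrm{diag}(u_1,\dots,u_{n-1},-u_n)$ respectively.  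Hence, once $N$ is large in terms of $Q$ alone, $p_u(\sigma^{+})>0>p_u(\sigma^{-})$ for every $u\in U_N$, and since $S^{n-1}$ is connected $p_u$ must vanish somewhere on it; thus $U_N$ consists of bad $u$, and the good set is not dense.  (For $n=1$ the good set really is dense, as the statement allows.)

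The last step is the dimension restriction.  If $u$ is good then $B_u$ is nonsingular, so for $\sigma\neq0$ the left multiplication $L_\sigma:=B_u(\sigma,\cdot)$ is invertible, for $v\neq0$ the right multiplication $R_v:=B_u(\cdot,v)$ is invertible, and both $\sigma\mapsto L_\sigma$ and $v\mapsto R_v$ are injective.  Fixing any $a,c\neq0$, the ``rebalanced'' product $\sigma\star v:=B_u\big(R_c^{-1}\sigma,\,L_a^{-1}v\big)$ is again nonsingular and bilinear and has two-sided unit $e:=B_u(a,c)$, so $(\R^n,\star)$ is a finite-dimensional real division algebra with unit; by the classical theorem that such algebras occur only in dimensions $1,2,4,8$ (Bott--Milnor, Kervaire), the good set is empty for every other $n$.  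The only non-elementary input is this last citation; the point requiring care is Step~1, since ``nonvanishing rotational curvature'' demands $\det\mathcal C\neq0$ at \emph{every} nonzero covector, and it is exactly this universal quantifier --- not the mere non-vanishing of the polynomial $p_u$ --- that produces the division-algebra obstruction and hence the list $1,2,4,8$.
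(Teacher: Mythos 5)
Your proposal addresses only the second sentence of the theorem (the rotational-curvature assertions) and omits the first sentence entirely---the claim that $T_{Q_u}$ maps $L^p\to L^q$ on the stated triangle for almost every $u$---which is the theorem's main analytic content and occupies nearly all of the paper's proof. The paper establishes that claim in three pieces: a Christ-style ``inflation'' lemma (Lemma~\ref{estimate}) bounding $\|T\chi_F\|_{(k+1)/\theta}$ via a sublevel-set estimate on the Jacobian determinant $J_\Phi$ of the map \eqref{mapping}; Theorem~\ref{pointwise}, which turns a uniform sublevel bound $|\{t:J_\Phi(x,t)\leq\epsilon\}|\lesssim_\theta\epsilon^\theta$ into restricted bounds along the segment from $(0,0)$ to $(\frac23,\frac13)$, with the triangle filled in by interpolation and closure under adjoints; and finally a rearrangement lemma (Lemma~\ref{rearrange}) plus Tchebyshev's inequality showing that for every fixed $Q$ the sublevel estimate for $|\det Q_u(\cdot,t)|$ holds for a.e.\ $u$. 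None of this is sketched in the proposal, so as a proof of the statement it has a genuine gap: the bulk of the theorem is unproved.

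The part you do address is correct and shows good understanding. Your reduction of nonvanishing rotational curvature for $T_{Q_u}$ to the condition that $\det\bigl(\sum_l\sigma_l A_l\bigr)\neq0$ for all $\sigma\in\R^n\setminus\{0\}$ (with $A_l$ the slices of $Q_u$) agrees with the paper's reduction in the proof of Theorem~\ref{numthm}, where the Phong--Stein matrix collapses to \eqref{matrix}; the base-point dependence drops out exactly as you say. Your openness argument via continuity of $u\mapsto\min_{|\sigma|=1}|p_u(\sigma)|$ is correct, and your non-density argument (exhibiting the open cone $U_N$ on which $p_u$ changes sign on $S^{n-1}$, forcing a zero by connectedness for $n\geq2$) is a clean, elementary alternative to the paper's discussion of the product example. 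For the restriction to $n\in\{1,2,4,8\}$, you take the division-algebra route: from a nonsingular $B_u$ you build a bilinear product with two-sided unit via $\sigma\star v:=B_u(R_c^{-1}\sigma,L_a^{-1}v)$ and invoke Bott--Milnor/Kervaire. The paper explicitly flags this as an ``interesting, if somewhat unexpected, alternate proof'' but instead cites Theorem~A of Friedland--Robbin--Sylvester, which settles the question for \emph{every} pair of dimension and codimension (giving the Radon--Hurwitz-type bound $n'\leq8q+2^r$), not just the equidimensional case $n'=n$ needed here; your argument is more self-contained but only covers that special case.
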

Typical Knapp-type examples show that no larger range of exponents can hold. A corollary corollary of the main theorem is that when each $Q$ is identified with a point in $\R^{n^3}$ by its coordinates $Q_{ijk}$, the optimal range of estimates (modulo endpoint) holds for a dense subset of $\R^{n^3}$.   The same statement is true for convolutions with measures on $n$-dimensional quadratic submanifolds in $\R^{2n}$, for the simple reason that the perturbed $Q_u$ are all symmetric and so also correspond to such convolutions.  It is not known whether the operators $Q_u$ to which theorem \ref{mainthm} applies form an open subset of $\R^n$ for every fixed $Q$.

The reader may be interested in a ``pointwise'' criterion by which one may guarantee that a particular $T_Q$ satisfies the range of estimates identified in theorem \ref{mainthm}.  The particular criterion established in this paper is the following:

\begin{theorem}
Suppose $\Phi : \R^n \times \R^n \rightarrow \R^n$ is smooth and $\Phi(x,t)$ is linear in $t$.  For each $x$, let $J_{\Phi}(x,t)$ be the absolute value of the determinant of the map $u \mapsto (u \cdot \nabla_x) \Phi(x,t)$.
Suppose also that for each fixed $t$, the equation $\Phi(x,t) = c$ has only boundedly many solutions with $J_\Phi(x,t) \neq 0$ (note that this is immediately true when $\Phi$ is bilinear).
  If for each $\theta \in [0, 1)$, there is a constant $C_\theta$ such that \label{pointwise}
\begin{equation} \set{ t \in [-1,1]^n}{ J_\Phi(x,t) \leq \epsilon} \leq C_\theta \epsilon^\theta \label{sublevel} \end{equation}
uniformly for all $x$ and all $\epsilon > 0$, then the operator
\begin{equation} T f(x,x') := \int_{[-1,1]^n} f(x+t, x' + \Phi(x,t)) dt \label{corop} \end{equation}
maps $L^p \rightarrow L^q$ for all pairs $(\frac{1}{p}, \frac{1}{q})$ on the line segment between $(\frac{2}{3}, \frac{1}{3})$ and $(0,0)$ except possibly the endpoint $(\frac{2}{3},  \frac{1}{3})$.  
\end{theorem}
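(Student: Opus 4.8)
The plan is to reduce the assertion to a one‑parameter family of restricted weak‑type estimates that converge to the endpoint, and to obtain those by a ``method of refinements'' in which $J_\Phi$ plays the role of the crucial non‑degeneracy Jacobian. Two preliminary reductions are available: since $\Phi(\cdot,t)$ is smooth and $t$ ranges over a fixed compact set, $T$ maps $L^\infty\to L^\infty$, giving the vertex $(0,0)$; and $T$ is local in the first variable, carrying functions supported on $Q\times\R^n$ into functions supported on $cQ\times\R^n$ for a fixed dilate of the cube $Q$, so that (decomposing the first variable into unit cubes and summing, using $p\le q$) we may assume throughout that $x$ lies in a fixed unit cube, hence that $\Phi$, $J_\Phi$ and their derivatives are bounded on the domain of integration. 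It then suffices to prove, for each $\theta<1$, a restricted weak‑type bound for $T$ at the point $\theta\cdot(\tfrac23,\tfrac13)$ of the segment; real interpolation of these bounds with one another and with the bound at $(0,0)$ then gives strong‑type $L^p\to L^q$ at every interior point of the segment.

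The restricted‑type estimate rests on the geometry of iterated incidences, the key computation being that the chain ``start at $(y,y')$, move backwards along the surface by $s_1$, then forwards by $s_2$'' ends at
\[ \bigl(\,y-s_1+s_2,\ \ y'-\Phi(y-s_1,s_1)+\Phi(y-s_1,s_2)\,\bigr)\ =\ \bigl(\,y+v,\ \ y'+\Phi(z,v)\,\bigr),\qquad (z,v):=(y-s_1,\ s_2-s_1), \]
where the second equality uses that $\Phi$ is linear in $t$, and that the Jacobian of $(s_1,s_2)\mapsto(y+v,\,y'+\Phi(z,v))$ has absolute value exactly $J_\Phi(z,v)$; equivalently, this is the reason the Schwartz kernel of $T^{*}T$ carries the factor $\sum 1/|J_\Phi(z,v)|$, summed over the boundedly many $z$ with $\Phi(z,v)$ equal to a prescribed value (finite, by the bounded‑multiplicity hypothesis), once one changes variables from the $s_i$. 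Since an $n$‑dimensional submanifold of $\R^{2n}$ is half‑dimensional, two incidence‑moves already span the ambient space, so this composite, rather than the longer iteration appropriate to curves, is the right object. Running the standard scheme, one may assume after pigeonholing that $\ip{T\chi_E}{\chi_F}\sim\alpha|F|$ with $T\chi_E\sim\alpha$ on $F$, pass to refinements of $E$ and $F$ on which these densities are essentially constant, and extract, for most base points in the refinement of $E$, a set of pairs $(s_1,s_2)$ of measure $\gtrsim\alpha\beta$ whose image under the composite map lies in $E$, where $\beta$ is the dual density (so $\alpha|F|\sim\ip{T\chi_E}{\chi_F}\sim\beta|E|$). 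The bounded‑multiplicity hypothesis makes the composite map boundedly‑many‑to‑one off $\{J_\Phi=0\}$, which is Lebesgue‑null in $(z,v)$ --- by \eqref{sublevel} with $\epsilon=0$ every $v$‑slice is null, hence so is the set by Fubini --- so that set may be discarded, giving $|E|\gtrsim\int_S J_\Phi(z,v)\,dz\,dv$ for a set $S$ with $|S|\gtrsim\alpha\beta$.

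The sublevel hypothesis \eqref{sublevel} enters exactly here, to bound $\int_S J_\Phi$ from below: on a $v$‑slice one discards the part where $J_\Phi\le\epsilon$, of measure $\le C_\theta\epsilon^\theta$, optimizes in $\epsilon$, and gets $\int_{S_z}J_\Phi(z,v)\,dv\gtrsim_\theta|S_z|^{1+1/\theta}$; reassembling the slices with Jensen's inequality (the exponent exceeds $1$, the $z$‑range being bounded) gives $\int_S J_\Phi\gtrsim_\theta|S|^{1+1/\theta}\gtrsim_\theta(\alpha\beta)^{1+1/\theta}$. Combining $|E|\gtrsim_\theta(\alpha\beta)^{1+1/\theta}$ with the symmetric ``$TT^{*}$'' chain, the identities $\alpha|F|\sim\ip{T\chi_E}{\chi_F}\sim\beta|E|$, and the a priori bounds $\alpha,\beta\lesssim1$, produces a restricted weak‑type inequality for $T$ whose exponents lie on the self‑dual line $1/p+1/q=1$ and converge to those of $L^{3/2}\to L^3$ as $\theta\to1$. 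Because \eqref{sublevel} is assumed only for $\theta<1$, the exponent $1+1/\theta$ stays strictly above the value the endpoint estimate would require, which is precisely why the endpoint is omitted.

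The step I expect to be the main obstacle is the combinatorial core of the refinement argument, together with its bookkeeping: two incidence‑moves by themselves relate $|E|$ (or $|F|$) to $\alpha$ but do not couple $|E|$ and $|F|$ tightly enough to land on the self‑dual line near $(\tfrac32,3)$ rather than at a strictly weaker point, so one must play the $T^{*}T$ and $TT^{*}$ chains off one another, use $\alpha,\beta\lesssim1$, and perhaps either lengthen the chain or supplement it with an $L^2$‑Sobolev smoothing estimate for the pieces on which $J_\Phi$ is comparable to a fixed scale --- all while carrying the $\theta$‑dependence through so that the resulting point of the segment provably tends to the endpoint. A minor point: \eqref{sublevel} is invoked with $v$ ranging over cubes other than $[-1,1]^n$; for bilinear $\Phi$ this costs only a power of $2$ by the degree‑$n$ homogeneity of $J_\Phi(z,\cdot)$, and in general follows from a finite covering.
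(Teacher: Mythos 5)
Your route is genuinely different from the paper's, and the worry you flagged at the end is not a minor bookkeeping issue but the actual obstruction: the two-move chain does not reach the claimed segment.

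Concretely, your chain produces $|E|\gtrsim_\theta(\alpha\beta)^{1+1/\theta}$ with $\alpha\sim I/|F|$, $\beta\sim I/|E|$ and $I:=\ip{T\chi_E}{\chi_F}$. Setting $\gamma:=1+1/\theta$ and $a:=(1+\gamma)/(2\gamma)=(2\theta+1)/(2\theta+2)$, this rearranges to $I\lesssim|E|^{a}|F|^{1/2}$, and the symmetric chain gives $I\lesssim|E|^{1/2}|F|^{a}$; together with $\alpha,\beta\lesssim1$ (i.e.\ $I\lesssim|E|$, $I\lesssim|F|$) these are the only bounds available. Test them on $|E|=|F|=M<1$: the four upper bounds equal $M^{a+1/2}$, $M^{a+1/2}$, $M$, $M$, so the best you can conclude is $I\lesssim M^{a+1/2}$. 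As $\theta\to1^-$, $a+1/2\to5/4$, whereas restricted weak type at $(\tfrac1p,\tfrac1q)=(\tfrac{2\theta'}{3},\tfrac{\theta'}{3})$ requires $I\lesssim M^{1+\theta'/3}$ with $\theta'\to1$, i.e.\ an exponent tending to $4/3>5/4$. Solving $a+1/2\ge1+\theta'/3$ shows the best achievable $\theta'$ is $3\theta/(2\theta+2)\to3/4$, so the two-move chain saturates at the point $(\tfrac12,\tfrac14)$, well short of $(\tfrac23,\tfrac13)$. Lengthening the chain is not obviously the fix either, since each additional pair of moves introduces $2n$ new free parameters while the endpoint still lies in $\R^{2n}$; you would have to freeze half the variables and control how $J_\Phi$ behaves under that freezing, which is not addressed. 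Your preliminary reductions, the computation of the composite Jacobian $J_\Phi(z,v)$, the use of bounded multiplicity, and the slice-by-slice lower bound $\int_{S_z}J_\Phi\gtrsim_\theta|S_z|^{1+1/\theta}$ via \eqref{sublevel} and Jensen are all sound and parallel ingredients in the paper; it is only the top-level combinatorial scheme that fails.

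The paper sidesteps the whole $E$--$F$ coupling problem by never introducing $E$: Lemma~\ref{estimate} bounds $\|T\chi_F\|_{3/\theta}$ directly. One expands $(T\chi_F(x,x'))^{3}=\int_{\Omega^3}\prod_{j=0}^{2}\chi_F\bigl(x+t^{(j)},x'+\Phi(x,t^{(j)})\bigr)\,dt$, symmetrizes so that $t^{(0)}$ is the index whose omission minimizes the Jacobian $J_\Phi(x,t^{(1)},t^{(2)})=\lvert\det\frac{\partial\Phi}{\partial x}(x,t^{(2)}-t^{(1)})\rvert$, uses \eqref{sublevel} to bound the $t^{(0)}$ integral by $CJ_\Phi^\theta$, applies H\"older with exponents $1/\theta$ and $1/(1-\theta)$, and finally integrates in $(x,x')$ via the change of variables with Jacobian $J_\Phi$ and the bounded-multiplicity hypothesis to produce $N|F|^2$. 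This yields $\|T\chi_F\|_{3/\theta}\lesssim|F|^{2\theta/3}$, i.e.\ restricted type exactly at $(\tfrac{2\theta}{3},\tfrac{\theta}{3})$, which tends to the endpoint as $\theta\to1^-$; interpolation with $L^\infty\to L^\infty$ then finishes. If you want to salvage a Christ-style argument you would need to replace your $T^*T$ chain by this trilinear form (or some equivalent), since a bilinear pairing against $\chi_E$ genuinely loses information here.
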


The proofs of theorems \ref{mainthm} and \ref{pointwise}, though relatively short, are postponed until the next section.
The  reader will no doubt be aware that, when the rotational curvature of the operator \eqref{corop} is nonvanishing, the conclusion of  theorem \ref{pointwise} (including the endpoint case) is obtainable by a relatively standard interpolation argument (see, for example, pgs. 427--428 of Stein \cite{steinha}).  The novelty of theorem 2, then, is that the hypothesis \eqref{sublevel} is strictly weaker than the assumption of nonvanishing rotational curvature.  The concept of rotational curvature was introduced by Phong and Stein \cite{ps1986I,ps1986II} based on earlier ideas from the study of FIOs \cite{gs1977}. The nonvanishing of rotational curvature is well-known to characterize the nondegeneracy of Radon-like averaging operators over submanifolds of codimension one for both optimal Sobolev regularity and $L^p$-improvement; for interesting related characterizations, see Oberlin \cite{oberlin2000II}.   Rotational curvature can be defined for averaging operators of any codimension (the reader should see Greenleaf and Seeger \cite{gs2002} for a very nice discussion of the rotational curvature condition and its relationship to FIO theory), but as is frequently noted in the literature, it must vanish identically when codimension exceeds the dimension of the submanifold.  Thus the case that we consider here is critical for rotational curvature.

As soon as the codimension exceeds one, there are two important issues that arise.  The first is that the nonvanishing of rotational curvature no longer generically holds.  Specifically, if one considers product-type averaging operators
\[ T f (x,x') := \! \int_{[-1,1]^n} \! \! f(x + t, x_1' + t_1^2,\ldots, x_n'+t_n^2) ~ dt_1 \cdots dt_n, \]
not only does the rotational curvature vanish at every point, but it is relatively easy to show that any sufficiently small perturbation of the underlying family of submanifolds will continue have vanishing rotational curvature at every point.  Modifications of this example establish the same failure of rotational curvature to be generic whenever the codimension is at least two.  

More distressing than the failure of nonvanishing rotational curvature to be generic is that typically there will be no family of submanifolds whatsoever with nonvanishing rotational curvature unless the dimension $d$ and the codimension $n'$ satisfy extremely strict number-theoretic compatibility constraints.  Related examples of such constraints are well-known---in his thesis, Christ \cite{christthesis} observes that, in the case of submanifolds of codimension 2, optimal Fourier restriction estimates change when the ambient dimension is odd versus even. More recently, Bourgain and Guth \cite{bg2011} established a restriction theorem which depends on the dimension mod 3.  It does not appear, however, that a complete listing of the possibility or impossibility of nonvanishing rotational curvature in terms of dimension and codimension has ever been undertaken in the harmonic analysis literature.  In reality, it is much more common (in terms of dimension and codimension) for Radon-like operators to be degenerate (in the Phong-Stein sense) than nondegenerate.  
Specifically, fix open sets $\Omega_L, \Omega_R \subset \R^d$ and let $\mathcal I$ be an incidence relation on $\Omega_L \times \Omega_R$ given by some smooth defining function $\Phi : \Omega_L \times \Omega_R \rightarrow \R^{n'}$, i.e.,
 \[ {\mathcal I} := \set{ (x,y) \in \Omega_L \times \Omega_R}{ \Phi(x,y) = 0 }. \]  We suppose that the natural projections $\pi_L : {\mathcal I} \rightarrow \Omega_L$ and $\pi_R : {\mathcal I} \rightarrow \Omega_R$ have surjective differentials.  We can define a Radon-like operator via
 \begin{equation} R f(x) := \int_{ \set{y \in \Omega_R}{\Phi(x,y) = 0}} f(y) d \sigma(y) \label{ps} \end{equation}
 where $d \sigma(y)$ is the induced Lebesgue measure.  Unless $d$ and $n'$ satisfy a strict number-theoretic compatibility relation, the operator \eqref{ps} cannot generally be nondegenerate in the Phong-Stein sense:
 
 \begin{theorem}
Suppose that $d-n'$ (i.e., the dimension of the submanifolds) factors into the form $2^{4q + r} s$ for integers $q,r,s,$ such that $s$ is odd and $0 \leq r \leq 3$.  Then the operator  \eqref{ps} must have vanishing rotational curvature when $n'$ (i.e, the codimension of the submanifolds) exceeds $8q + 2^r$.  If the \label{numthm} codimension does not exceed $8q + 2^r$, then there exist operators of the form \eqref{ps} with nonvanishing rotational curvature. 
\end{theorem}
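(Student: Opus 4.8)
The plan is to show that, after a choice of coordinates, the nonvanishing of the rotational curvature of \eqref{ps} at a point of $\mathcal I$ is equivalent to the statement that a certain $n'$-dimensional linear subspace of $M_m(\R)$, with $m := d-n'$, built from the second-order jet of the defining function $\Phi$ there, consists (apart from $0$) entirely of invertible matrices; and then to invoke the classical determination --- due to Hurwitz and Radon for the lower bound, and, via Adams's theorem on vector fields on spheres, to Adams, Lax, and Phillips for the upper bound --- of the maximal dimension $\rho(m)$ of such a subspace. Writing $m = 2^{4q+r}s$ with $s$ odd and $0 \le r \le 3$, the Radon--Hurwitz number is $\rho(m) = 8q + 2^{r}$, so this is exactly the threshold appearing in the theorem.

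To carry out the reduction, fix $(x_0,y_0) \in \mathcal I$. Since $\pi_L$ has surjective differential, $\partial_y \Phi$ has full rank $n'$ there, so we may relabel the $y$-variables as $(\bar y, y') \in \R^{m}\times\R^{n'}$ and solve $\Phi(x,y)=0$ locally as $y' = \phi(x,\bar y)$; up to a smooth nonvanishing Jacobian factor (irrelevant to rotational curvature) the operator becomes $Rf(x) = \int f(\bar y,\phi(x,\bar y))\,d\bar y$. Writing $x = (\bar x, x')$, I would then compute the differential of the projection of the canonical relation
\[
  \mathcal C = \set{ \big( x,\ \sigma^{T}\partial_x\Phi(x,y);\ y,\ -\sigma^{T}\partial_y\Phi(x,y) \big) }{ \Phi(x,y)=0,\ \sigma \in \R^{n'} }
\]
onto $T^{*}\Omega_L$ at the point over $(x_0,y_0)$ with covector parameter $\sigma$. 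A direct computation (arranging, if necessary, that $\phi$ depends on $x'$ only through an additive $x'$, which can always be achieved at a single point) shows that this differential is nonsingular exactly when the $m\times m$ matrix
\[
  M(\sigma) := \Big( \sum_{k=1}^{n'} \sigma_k \frac{\partial^{2}\phi_k}{\partial \bar x_i\,\partial \bar y_j}(x_0,\bar y_0) \Big)_{1 \le i,j \le m}
\]
is invertible; the projection onto $T^{*}\Omega_R$ gives an equivalent (transposed) condition. Thus rotational curvature is nonvanishing at $(x_0,y_0)$ precisely when $M(\sigma)$ is invertible for every $\sigma \in \R^{n'}\setminus\{0\}$ --- that is, precisely when the matrices $M_k := (\partial^{2}\phi_k/\partial\bar x_i\,\partial\bar y_j)_{ij}$, $k=1,\dots,n'$, span an $n'$-dimensional subspace of $M_m(\R)$ whose nonzero elements are all invertible. (For $n'=1$ this recovers the familiar statement that rotational curvature for a hypersurface Radon operator is the nonvanishing of the mixed Hessian of its defining function.)

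Granting the reduction, both halves follow quickly. If $n' > 8q + 2^{r} = \rho(m)$, then no $n'$-dimensional subspace of $M_m(\R)$ can avoid singular matrices, so $M(\sigma)$ must be singular for some $\sigma \ne 0$ at every point of $\mathcal I$; hence the rotational curvature of \eqref{ps} vanishes identically. For the converse, suppose $n' \le \rho(m)$ and fix, via the Hurwitz--Radon construction, matrices $M_1 = I, M_2,\dots,M_{n'} \in M_m(\R)$ --- with $M_2,\dots,M_{n'}$ orthogonal, skew, and pairwise anticommuting --- so that $\sum_k \sigma_k M_k$ is invertible for all $\sigma \ne 0$. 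Splitting $\R^{d} = \R^{m}\times\R^{n'}$, define $\Phi : \R^{d}\times\R^{d}\to\R^{n'}$ by
\[
  \Phi\big((\bar x,x'),(\bar y,y')\big)_k := y'_k - x'_k - \sum_{i,j}(M_k)_{ij}\,\bar x_i\,(\bar y_j - \bar x_j), \qquad k = 1,\dots,n'.
\]
Both $\partial_x\Phi$ and $\partial_y\Phi$ contain an $I_{n'}$ block and so have full rank, so $\pi_L$ and $\pi_R$ have surjective differentials; and by the computation above the rotational curvature of the resulting operator is governed at every point of $\mathcal I$ by $\sum_k\sigma_k M_k$, which is invertible for all $\sigma \ne 0$. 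Hence operators of the form \eqref{ps} with nonvanishing rotational curvature exist whenever $n' \le 8q + 2^{r}$.

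The main obstacle is the reduction in the second paragraph: extracting, from the definition of rotational curvature (equivalently, the local-canonical-graph condition) in codimension $n' > 1$, the clean statement that the span of the mixed second derivatives of the defining function must be a ``nonsingular subspace'' of $M_m(\R)$. The bookkeeping with the canonical relation and the straightening coordinates must be done carefully --- in particular, one must verify that the conditions coming from the two projections are equivalent, so that invertibility of $M(\sigma)$ for all $\sigma \ne 0$ is the whole content of nondegeneracy, and one must check that the normalization making $\phi$ additively linear in $x'$ loses no generality for a pointwise statement. Once this is in hand, the number-theoretic dichotomy is simply the evaluation of the Radon--Hurwitz function, and nothing beyond the classical theorem on linear families of nonsingular real matrices is required.
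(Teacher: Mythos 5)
Your proposal is correct and takes essentially the same route as the paper: reduce nonvanishing rotational curvature to the invertibility of every nontrivial linear combination of $n'$ mixed-Hessian matrices of size $m\times m$ (where $m=d-n'$), and then invoke the classical characterization of nonsingular linear families in $M_m(\R)$. The only cosmetic difference is that the paper works directly with the Monge--Amp\`ere matrix of $\Phi$ in adapted coordinates and cites Theorem A of Friedland--Robbin--Sylvester for the matrix dichotomy, while you pass to the graph form $y'=\phi(x,\bar y)$ and cite Hurwitz--Radon and Adams--Lax--Phillips for the same threshold $\rho(m)=8q+2^r$.
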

\begin{proof}
Make some choice of coordinate systems on $\Omega_L$ and $\Omega_R$. The Phong-Stein rotational curvature condition requires the invertibility of the Monge-Ampere type matrix
\[ \left( \begin{array}{cccccc} \sum_{i=1}^{n'} \lambda_i \frac{\partial^2 \Phi_i}{\partial x_1 \partial y_1} & \cdots & \sum_{i=1}^{n'} \lambda_i \frac{\partial^2 \Phi_i}{\partial x_1 \partial y_d} & \frac{\partial \Phi_1}{\partial x_1} & \cdots & \frac{\partial \Phi_{n'}}{\partial x_1}  \\
 \vdots & \ddots & \vdots & \vdots & \ddots & \vdots \\
\sum_{i=1}^{n'} \lambda_i \frac{\partial^2 \Phi_i}{\partial x_d \partial y_1} & \cdots & \sum_{i=1}^{n'} \lambda_i \frac{\partial^2 \Phi_i}{\partial x_d \partial y_d} & \frac{\partial \Phi_1}{\partial x_d} & \cdots & \frac{\partial \Phi_{n'}}{\partial x_d}  \\
\frac{\partial \Phi_1}{\partial y_1} & \cdots & \frac{\partial \Phi_1}{\partial y_d} & 0 & \cdots & 0 \\
\vdots & \ddots & \vdots & \vdots & \ddots & \vdots  \\
\frac{\partial \Phi_{n'}}{\partial y_1} & \cdots & \frac{\partial \Phi_{n'}}{\partial y_d} & 0 & \cdots & 0
\end{array} \right) \]
for any choice $(\lambda_1,\ldots,\lambda_{n'}) \in \R^{n'} \setminus \{0\}$.  If we assume that the coordinate systems are chosen so that $\frac{\partial \Phi_i}{\partial x_j} = \frac{\partial \Phi_i}{\partial y_j} =  0$ for $1 \leq i \leq n'$ and $1 \leq j \leq d-n'$, then immediately the surjectivity of the differentials $d \pi_L$ and $d \pi_R$ makes this question equivalent to the invertibility of the matrix
\begin{equation} \left( \begin{array}{ccc} \sum_{i=1}^{n'} \lambda_i \frac{\partial^2 \Phi_i}{\partial x_1 \partial y_1} & \cdots & \sum_{i=1}^{n'} \lambda_i \frac{\partial^2 \Phi_i}{\partial x_1 \partial y_{d-n'}}  \\
 \vdots & \ddots & \vdots  \\
\sum_{i=1}^{n'} \lambda_i \frac{\partial^2 \Phi_i}{\partial x_{d-n'} \partial y_1} & \cdots & \sum_{i=1}^{n'} \lambda_i \frac{\partial^2 \Phi_i}{\partial x_{d-n'} \partial y_{d-n'}}
 \end{array} \right) \label{matrix} \end{equation}
 for all nonzero $\lambda \in \R^{n'}$.  
 In other words, we must find $n'$ real matrices of size $(d-n') \times (d-n')$ such that any nontrivial linear combination of these matrices is invertible.  
 If and when any such family of matrices exists, it is straightforward to construct an appropriate $\Phi$, linear in both $x$ and $y$, which will have nonvanishing rotational curvature.
 
 It does not appear to be widely-known in the harmonic analysis literature, but the existence of such families of matrices has been completely characterized for some time.  We refer the reader to Theorem A of Friedland, Robbin, and Sylvester \cite{frs1984}, obtained along the way during their complete characterization of strictly hyperbolic systems of first-order PDEs. The theorem establishes the equivalence of a variety of related algebraic and topological statements; one of which (condition (A4)) is the existence of a family of matrices with the invertibility properties we seek, and another (condition (A1)) is the number-theoretic constraint in the statement of the present theorem: $n' \leq 8 q + 2^r$.  The methods found in \cite{frs1984} are quite different from those to be taken up shortly, so we refrain from reproducing that proof here.
 \end{proof}

Let us pause for a moment to observe the severity of the constraint identified in theorem \ref{numthm}.  For convenience, define $n := d-n'$. It is frequently observed in the literature that the rotational curvature condition cannot be satisfied when $n' > n$.  Less frequently observed is that codimension $n'$ must equal one when $n$ is odd (which is essentially the phenomenon observed by Christ \cite{christthesis}), simply because the determinant of \eqref{matrix} acquires a minus sign when $\lambda$ is replaced by $- \lambda$ (so that the determinant must vanish at some point on the unit sphere in $\R^{n'}$).  In general, the codimension $n'$ can only grow when the dimension $n$ is divisible by a large power of $2$, and even in this case the codimension is constrained to grow logarithmically in the dimension.  The special case $n = n'$ is allowed only when $n=1,2,4$, or $8$. An interesting, if somewhat unexpected, alternate proof of this fact involves using a nondegenerate collection of $n$ matrices of size $n \times n$ to construct the multiplication table for a division algebra on $\R^n$.  Along these same lines, positive examples of matrices satisfying these properties can be constructed by considering the action of $n$ generators on the Clifford algebra $Cl_n$ associated to any definite quadratic form.

%
%
%
%
%
%
%
%

\section{Proofs of theorems \ref{mainthm} and \ref{pointwise}}


At its core, the proof of theorems \ref{mainthm} and \ref{pointwise} is a variation on the method of inflation due to Christ \cite{christ1998} (see \cite{tw2003,eo2010,stovall2011,dlw2009,gressman2013} for a variety of recent results which extend Christ's ideas in various directions). Because our class of averaging operators is effectively closed under adjoints (and clearly bounded on $L^\infty$), it suffices to establish restricted $L^p \rightarrow L^q$ estimates for a sequence of pairs $(\frac{1}{p_m}, \frac{1}{q_m})$ tending to $(\frac{2}{3},\frac{1}{3})$ along the line segment from $(0,0)$ to $(\frac{2}{3}, \frac{1}{3})$.

Specifically, suppose that $n$, $n'$, and $k$ are integers such that $n = n'(k-1)$. Let $x' \in \R^{n'}$, $x, t^{(1)},\ldots,t^{(k)} \in \R^n$, and let $\Phi$ be a smooth map from $\R^n \times \R^n$ to $\R^{n'}$ (note the argument that follows may easily be modified if $\Phi$ is defined only on an open subset of $\R^n \times \R^n$).  We consider the map
\begin{equation} 
\begin{split}
(x,x', t^{(1)},\ldots,& t^{(k)}) \mapsto \\ & \left( x + t^{(1)}, x' + \Phi(x, t^{(1)}), \ldots, x + t^{(k)}, x' + \Phi(x, t^{(k)}) \right).
\end{split} \label{mapping} \end{equation}
We may write the Jacobian matrix of this mapping in block form (with the various coordinate components of the mapping \eqref{mapping} corresponding to rows and derivatives with respect to variables $(x,x',t^{(1)},\ldots,t^{(k)})$ corresponding to columns) as
\[ \left[ \begin{array}{cc|ccccc}  
I_{n \times n} & 0 & I_{n \times n}   & 0 &  \cdots & \cdots & 0 \\ 
\frac{\partial \Phi}{\partial x}(x,t^{(1)}) & I_{n' \times n'} & \frac{\partial \Phi}{\partial t}(x, t^{(1)}) & \ddots & \ddots & \ddots & \vdots \\  
\vdots  & 0 & 0 & \ddots & \ddots & \ddots & \vdots \\ 
\vdots  & \vdots & \vdots & \ddots & \ddots & \ddots & 0 \\ 
 I_{n \times n} &  0 &  \vdots &  \ddots & \ddots & \ddots & I_{n \times n}    \\ 
\frac{\partial \Phi}{\partial x}(x,t^{(k)}) & I_{n' \times n'}  & 0 & \cdots & \cdots &0  &  \frac{\partial \Phi}{\partial t}(x, t^{(k)})  \\  

\end{array} \right].   \]
To compute the determinant, we take the columns on the right of the vertical divider (corresponding to derivatives with respect to the $t$'s) and subtract from the initial columns (derivatives with respect to $x$) to eliminate all the $I_{n \times n}$ squares to the left of the divider. Then we expand in the rows containing the remaining $I_{n \times n}$ blocks (i.e., the rows corresponding to components of $x + t^{(i)}$).  We conclude that the absolute value of the Jacobian determinant of \eqref{mapping} equals
\begin{equation} J_{\Phi}(x, t^{(1)},\ldots,t^{(k)}) := \left| \det \left[ \begin{array}{cc}  \left( \frac{\partial}{\partial x} - \frac{\partial}{\partial t} \right) \Phi (x, t^{(1)})  & I_{n' \times n'} \\ \vdots & \vdots \\ \left( \frac{\partial}{\partial x} - \frac{\partial}{\partial t} \right) \Phi(x, t^{(k)})  & I_{n' \times n'} \end{array} \right] \right|. \label{jacobian}
\end{equation}

With the Jacobian determinant now identified, we prove a lemma which relates certain sublevel sets constructed from the Jacobian to a corresponding Radon-like operator:
\begin{lemma}
Fix $\Omega \subset \R^n$ to have finite measure. \label{estimate}
Suppose that on $\R^n \times \R^{n'} \times \Omega^{k}$, the mapping \eqref{mapping} has nondegenerate multiplicity at most $N$ (i.e., among any $N+1$ distinct points mapping to the same value under \eqref{mapping}, at least one of the points belongs to the zero set of the Jacobian \eqref{jacobian}, where we interpret the $J_\Phi$ to be constant as a function of $x'$).
Suppose 
\begin{align*}
 \left| \set{t \in \R^n} {J_{\Phi}(x,t, t^{(1)}, \ldots, \widehat{t^{(i)}}, \ldots,t^{(k)}) \leq J_{\Phi}(x,t^{(1)},\ldots,t^{(k)}) \ \forall i=1,\ldots,k \! } \right| \\ \leq C J_{\Phi}^{\theta} (x,t^{(1)} ,\ldots,  t^{(k)}&)
 \end{align*}
uniformly for all choices of $x,t^{(1)},\ldots,t^{(k)}$, for some fixed $\theta \in (0,1]$ (here circumflex denotes omission).  Then the convolution operator $T$ given by
\[ T f(x) := \int_{\Omega} f(x +t, x' + \Phi(x,t)) dt \]
is of restricted type $( \frac{k+1}{k} \theta^{-1}, (k+1) \theta^{-1})$, i.e.,
\[ || T \chi_F||_{\frac{k+1}{\theta}} \leq C^{\frac{1}{k+1}} (k+1)^{\frac{1}{k+1}} N^{\frac{\theta}{k+1}} |\Omega|^{\frac{k(1-\theta)}{k+1}} |F|^\frac{k \theta}{k+1}. \]
\end{lemma}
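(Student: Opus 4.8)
The plan is to run the version of Christ's method of inflation set up just above the lemma. Because $n=n'(k-1)$, the map \eqref{mapping} goes between spaces of the same dimension $k(n+n')$, so its Jacobian \eqref{jacobian} can be fed into a genuine change of variables. Write $G:=T\chi_F$ and, for $X=(x,x')$, set $\Omega_X:=\{t\in\Omega:(x+t,x'+\Phi(x,t))\in F\}$, so that $G(X)=|\Omega_X|$; we may assume $|F|<\infty$. The core of the argument is the pointwise-in-$X$ estimate
\[ G(X)^{(k+1)/\theta}\le\big((k+1)C\big)^{1/\theta}\,|\Omega|^{k(1-\theta)/\theta}\int_{\Omega_X^k}J_\Phi(x,t^{(1)},\dots,t^{(k)})\,dt^{(1)}\cdots dt^{(k)}. \]
Given this, one integrates in $X$ over $\R^n\times\R^{n'}$, recognizes the result as $\int_{\mathcal A}J_\Phi$ over $\mathcal A:=\{(x,x',t^{(1)},\dots,t^{(k)}):t^{(i)}\in\Omega_X\ \forall i\}$ (with $J_\Phi$ read as a function of all these variables via \eqref{jacobian}, which does not involve $x'$), and closes with the change of variables \eqref{mapping}.

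The non-obvious ingredient is a purely combinatorial covering inequality: for every measurable $U\subseteq\R^n$ and every $x$,
\[ |U|^{k+1}\le(k+1)\int_{U^k}\Big|\big\{t:J_\Phi(x,t,t^{(1)},\dots,\widehat{t^{(i)}},\dots,t^{(k)})\le J_\Phi(x,t^{(1)},\dots,t^{(k)})\ \forall i\big\}\Big|\,dt^{(1)}\cdots dt^{(k)}. \]
I would prove this by covering $U^{k+1}$ with the $k+1$ sets $\mathcal B_j$ ($0\le j\le k$) of those $(t^{(0)},\dots,t^{(k)})$ for which $t^{(j)}$ lies in the bracketed set attached to the remaining $k$-tuple $(t^{(m)})_{m\ne j}$. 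Given $(t^{(0)},\dots,t^{(k)})\in U^{k+1}$, pick $j_0$ maximizing, over $j$, the value of $J_\Phi$ on the $k$-tuple obtained by deleting $t^{(j)}$; this uses that the absolute value in \eqref{jacobian} is invariant under permutations of the $t^{(i)}$, since such a permutation merely permutes blocks of rows. Reinserting $t^{(j_0)}$ in place of any other entry of the deleted $k$-tuple reproduces one of these $k$-tuples, whose $J_\Phi$-value is, by the choice of $j_0$, no larger than that of $(t^{(m)})_{m\ne j_0}$ itself; hence $(t^{(0)},\dots,t^{(k)})\in\mathcal B_{j_0}$. The $\mathcal B_j$ therefore cover $U^{k+1}$, and integrating out the $j$-th coordinate by Fubini bounds $|\mathcal B_j|$ by the integral on the right.

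The rest is routine. Applying the covering inequality with $U=\Omega_X$ and invoking the hypothesis of the lemma bounds $G(X)^{k+1}$ by $(k+1)C\int_{\Omega_X^k}J_\Phi(x,t^{(1)},\dots,t^{(k)})^{\theta}\,dt^{(1)}\cdots dt^{(k)}$; Hölder's inequality with exponents $1/\theta$ and $1/(1-\theta)$ (the case $\theta=1$ being trivial) bounds $\int_{\Omega_X^k}J_\Phi^{\theta}$ by $|\Omega_X|^{k(1-\theta)}\big(\int_{\Omega_X^k}J_\Phi\big)^{\theta}$, and since $|\Omega_X|\le|\Omega|$ this yields the displayed pointwise estimate after raising to the power $1/\theta$. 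Integrating in $X$ and using that \eqref{jacobian} does not depend on $x'$ gives $\int G^{(k+1)/\theta}\le\big((k+1)C\big)^{1/\theta}|\Omega|^{k(1-\theta)/\theta}\int_{\mathcal A}J_\Phi$. Since \eqref{mapping} carries $\mathcal A$ into $F^k$ and, off the zero set of its Jacobian \eqref{jacobian}, has at most $N$ preimages over each value (the nondegenerate multiplicity hypothesis; the part of $\mathcal A$ where $J_\Phi=0$ contributes nothing to the integral), the area formula gives $\int_{\mathcal A}J_\Phi\le N|F|^k$. Raising the resulting inequality to the power $\theta/(k+1)$ produces precisely the claimed bound.

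The one step I expect to demand genuine thought is the combinatorial covering inequality — in particular, seeing that the greedy choice of the index whose deletion maximizes the reduced Jacobian is exactly what forces the tuple into $\mathcal B_{j_0}$. Everything downstream (the Hölder interpolation in $J_\Phi$, the identification of the integrated Jacobian, and the area-formula bound, together with the harmless verification that the locus $J_\Phi=0$ is negligible) is bookkeeping.
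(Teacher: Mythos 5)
Your proof is correct and follows essentially the same route as the paper: raise to the $(k+1)$-st power, cover $\Omega^{k+1}$ by the $(k+1)$ sets where one index is ``locally minimal'' for the reduced Jacobians, apply the sublevel hypothesis, then H\"older's inequality and the change-of-variables formula with the nondegenerate-multiplicity bound. The only distinction is presentational — you state the pigeonhole covering step (including why permutation invariance of $J_\Phi$ makes the greedy choice of $j_0$ work) explicitly, where the paper compresses it to ``by symmetry.''
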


\begin{proof}
We fix an arbitrary measurable set $F$ and expand:
\begin{align*}
 &( T \chi_F(x,x'))^{k+1}  \\   & = \int_{\Omega^{k+1}} \!  \prod_{j=0}^k \left[ \chi_{F}(x + t^{(j)}, x'+ \Phi(x,t^{(j)})) \right]  dt^{(0)} \cdots dt^{(k)} \\
 & \leq (k+1)\int_{\Omega^{k+1} \cap S^{(0)}} \!  \!\prod_{j=0}^k \!  \left[ \chi_{F}(x + t^{(j)}, x' + \Phi(x,t^{(j)})) \right] \!  dt^{(0)} \cdots dt^{(k)}, 
 \end{align*}
 where, by symmetry, $S^{(0)}$ is the subset of $\Omega^{k+1}$ on which 
 \[ J_{\Phi}(x, t^{(0)}, \ldots, \widehat{t^{(i)}}, \ldots,t^{(k)}) \leq J_{\Phi}(x,t^{(1)},\ldots,t^{(k)}) \]
 for each $i=1,\ldots, k$.  By hypothesis, we know that the set of points $t^{(0)}$ for which these inequalities are true has measure at most $C J_\Phi^{\theta}(t^{(1)},\ldots,t^{(k)})$ when these remaining variables are considered fixed.  If we denote $J_{\Phi}(x, t^{(1)},\ldots,t^{(k)})$ simply by $J(x,t)$, then we have
 \begin{equation}
 \begin{split}
   & ( T \chi_F(x))^{k+1} \leq C'   \int_{\Omega^{k}} \! \prod_{j=1}^k \left[ \chi_{F}(x + t^{(j)}, x'+\Phi(t^{(j)})) \right] J^{\theta}(x,t) dt^{(1)} \cdots dt^{(k)}
\end{split} \label{midarg}
\end{equation}
with $C' := (k+1) C$.
Now apply H\"{o}lder's inequality to this integral, raising the term $J^\theta$ to the $\frac{1}{\theta}$ power and the constant function $1$ to the dual power, to conclude that $T \chi_F(x)$ is bounded above by
 \begin{align*}
 {C'}^\frac{1}{k+1}   |\Omega|^\frac{k (1-\theta)}{k+1} \left[ \int_{\Omega^{k}} \! \prod_{j=1}^k \left[ \chi_{F}(x + t^{(j)}, x'+\Phi(t^{(j)})) \right] J(x,t) dt^{(1)} \cdots dt^{(k)} \right]^\frac{\theta}{k+1}.
\end{align*}
Finally, observe that the quantity inside the brackets has integral at most $N |F|^k$ when integrated with respect to $x$ and $x'$ by the change of variables formula.  Thus we conclude that
\[ ||T \chi_F||_{\frac{k+1}{\theta}} \leq {C'}^\frac{1}{k+1}   |\Omega|^\frac{k (1-\theta)}{k+1} \left( N |F|^{k} \right)^{\frac{\theta}{k+1}} \]
exactly as desired.
%
%
\end{proof}

The passage from lemma \ref{estimate} to theorem \ref{pointwise} is fairly quick.  We take $d=2n$, $n' = n$, and $k=2$.  All that one needs to verify is an explicit calculation of the Jacobian \eqref{jacobian} and that the multiplicity hypothesis of theorem \ref{pointwise} implies control of the multiplicity of the mapping \eqref{mapping}.  Computation of \eqref{jacobian} is easy: it equals $| \det \frac{\partial \Phi}{\partial x} (x, t^{(2)} - t^{(1)})|$ by linearity in $t$. Translating $t^{(2)}$ by $t^{(1)}$, the sublevel estimate \eqref{sublevel} clearly implies the sublevel hypothesis of lemma \ref{estimate}.  Regarding multiplicity: consider all tuples $(x,x',t^{(1)},t^{(2)})$ for which
\[ (x + t^{(1)},x'+ \Phi(x,t^{(1)}), x + t^{(2)}, x'+\Phi(x,t^{(2)})) \]
equals some specified constant tuple.  Subtracting first two blocks from the second two, we get that
$t^{(2)} - t^{(1)}$ and $\Phi(x,t^{(2)}-t^{(1)})$ are both constant on this set.  If the set contains any points where the Jacobian is nonzero, then the multiplicity hypothesis of theorem \ref{pointwise} will immediately bound the number of possible values that $x$ can attain on the set.  In the specific case of bilinearity, nonvanishing of the Jacobian implies global invertibility of the map as a function of $x$, hence the multiplicity will simply equal one.  Finally, if $x$ takes boundedly many values and $x + t^{(1)}, x+ t^{(2)}$ are constant, then $t^{(1)}$ and $t^{(2)}$ take only boundedly many values, so finally, $x'$ will take on at most boundedly many values as well.  Thus theorem \ref{pointwise} is a simple consequence of lemma \ref{estimate}.

Returning to the specific situation of theorem \ref{mainthm}, the Jacobian determinant will equal 
\[ J_Q(x,t^{(1)},t^{(2)}) =  | \det Q ( \cdot, t^{(2)} - t^{(1)}) |,\]
where by $Q (\cdot, t^{(2)} - t^{(1)})$ we mean the mapping $v \mapsto Q(v, t^{(2)} - t^{(1)})$.   When \eqref{mainclass} has nonvanishing rotational curvature for this $Q$, it means that
\[ \lambda \cdot Q(\cdot,t^{(2)}-t^{(1)}) \]
is a nonzero linear functional whenever $\lambda \neq 0$ and $t^{(2)} - t^{(1)} \neq 0$.  This means that $J_Q \neq 0$ when $t^{(2)} - t^{(1)} \neq 0$.
 We will see that this condition can be relaxed considerably without sacrificing anything except possibly the endpoint $L^\frac{3}{2} \rightarrow L^3$ estimate (in particular, without sacrificing any of the other estimates that one would obtain by interpolation with the trivial $L^1 \rightarrow L^1$ and $L^\infty \rightarrow L^\infty$ estimates).

The multiplicity requirement of lemma \ref{estimate} already having been established, it suffices to show that
\begin{equation}
\begin{split}
 \left|\set{ t \in \Omega}{ |\det Q(\cdot, t - t^{(i)})| \leq | \det Q(\cdot, t^{(2)} - t^{(1)})|, \ i=1,2}\right|& \\  \leq C_{\theta} |\det Q(\cdot,  t^{(2)} & - t^{(1)})|^{\theta}. 
 \end{split} \label{partial}
\end{equation}
Translating once again, it will suffice to show that
\begin{align}
 \left|\set{ t \in \Omega}{ |\det Q(\cdot, t )| \leq \epsilon } \right|   \leq C_{\theta} \epsilon^{\theta}. \label{full}
 \end{align}
 We note that the replacement of \eqref{partial} by \eqref{full} is where we lose the possibility of obtaining a restricted version of the endpoint estimate.  It is easy to construct examples for which \eqref{partial} holds in the case $\theta = 1$, but, for many of these examples, \eqref{full} will not be possible when $\theta = 1$.

The proof of theorem \ref{mainthm} follows from theorem \ref{pointwise} by demonstrating that almost every $T_{Q_u}$ (as defined in theorem \ref{mainthm}) satisfies the hypotheses of theorem \ref{pointwise}.  Because the multiplicity concerns have already been dispatched, the only issue is the sublevel set estimate.  We will establish the estimate almost everywhere by means of an elementary rearrangement argument and Tchebyshev's inequality.  First the rearrangement argument:

\begin{lemma}
Suppose $F$ is nonnegative and decreasing on $[0,\infty)$, and let $\varphi : \R^n \rightarrow \R$ be smooth and have the property that $\frac{\partial^2}{\partial u_j^2} \varphi(u) = 0$ for all $j=1,\ldots,n$ and all $u \in \R^n$.  Then
\[ \int_{B} F(|\varphi(u)|) du \leq \int_{B} F \left( \left| u_1 \cdots u_n  \frac{\partial^n \varphi}{\partial u_1 \cdots \partial u_n}(0) \right| \right) du \] 
when $B$ is any $n$-fold \label{rearrange} product of intervals centered at the origin.
\end{lemma}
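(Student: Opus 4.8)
The plan is to peel off the coordinates $u_1,\dots,u_n$ one at a time, each step being a single instance of a one-variable rearrangement inequality. That one-variable fact is: for any interval $I\subset\R$ centered at the origin and any $a,b\in\R$,
\[ \int_I F(|au+b|)\,du \le \int_I F(|au|)\,du \]
whenever $F$ is nonnegative and decreasing on $[0,\infty)$. I would prove this by the layer-cake reduction: writing $F(s)=F(+\infty)+\int_{(s,\infty)}d(-F)(\lambda)$, it suffices to verify $|\{u\in I:|au+b|<\lambda\}|\le|\{u\in I:|au|<\lambda\}|$ for every $\lambda>0$. For $a\neq 0$ both sides are translates of one fixed interval of length $2\lambda/|a|$, the right-hand one being concentric with $I$; since among all translates of a fixed interval the one concentric with $I$ meets $I$ in the largest measure, the inequality follows (the case $a=0$ being trivial, the right-hand set then being all of $I$).

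Before iterating I would record the consequence of the hypothesis $\partial^2\varphi/\partial u_j^2\equiv 0$. Since partial derivatives commute, for each $1\le k\le n$ the iterated derivative $\partial^k\varphi/\partial u_1\cdots\partial u_k$ has vanishing second derivative in each of $u_{k+1},\dots,u_n$, hence is affine in each of those variables; moreover it does not depend on $u_1,\dots,u_k$ themselves, since differentiating once more in any $u_m$ with $m\le k$ reproduces a $\partial^2/\partial u_m^2$ applied to $\varphi$. For $k=n$ this says $\partial^n\varphi/\partial u_1\cdots\partial u_n$ has vanishing gradient on all of $\R^n$, so it equals the constant $\frac{\partial^n\varphi}{\partial u_1\cdots\partial u_n}(0)$; in particular $\varphi$ is affine in each variable separately.

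Now the induction on $j$. Put $\psi^{(0)}:=\varphi$ and
\[ \psi^{(j-1)}(u):=u_1\cdots u_{j-1}\,\frac{\partial^{j-1}\varphi}{\partial u_1\cdots\partial u_{j-1}}(u), \]
and assume $\int_B F(|\varphi|)\le \int_B F(|\psi^{(j-1)}|)$. The function $\psi^{(j-1)}$ is affine in $u_j$: the factor $u_1\cdots u_{j-1}$ is free of $u_j$, and $\partial^{j-1}\varphi/\partial u_1\cdots\partial u_{j-1}$ is affine in $u_j$ by the previous paragraph, with slope in $u_j$ equal to $u_1\cdots u_{j-1}\,\frac{\partial^{j}\varphi}{\partial u_1\cdots\partial u_{j}}(u)$, which is itself free of $u_j$. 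Freezing the remaining coordinates, applying the one-variable fact in $u_j$ over the centered interval $I_j$, and reassembling by Fubini yields $\int_B F(|\psi^{(j-1)}|)\le \int_B F(|\psi^{(j)}|)$ with $\psi^{(j)}(u):=u_1\cdots u_j\,\frac{\partial^{j}\varphi}{\partial u_1\cdots\partial u_{j}}(u)$. After $n$ steps, $\psi^{(n)}(u)=u_1\cdots u_n\,\frac{\partial^n\varphi}{\partial u_1\cdots\partial u_n}(0)$, which is exactly the claimed bound.

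The only genuinely non-bookkeeping ingredient, and the point I expect to need the most care, is the elementary geometric claim inside the one-variable fact: that a fixed-length interval meets the symmetric interval $[-r,r]$ in largest measure when centered at $0$. This is immediate from the observation that $c\mapsto\big|[c-\tfrac{\ell}{2},c+\tfrac{\ell}{2}]\cap[-r,r]\big|$ is even and non-increasing in $|c|$. Everything else --- commuting partial derivatives, the layer-cake identity converting the sub-level comparison into the integral inequality for every decreasing $F$, and Fubini --- is routine, so I anticipate no real obstacle beyond careful accounting.
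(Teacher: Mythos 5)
Your proof is correct and follows essentially the same strategy as the paper's: reduce to a one-variable rearrangement inequality for $F(|au+b|)$ over a symmetric interval, then iterate coordinate-by-coordinate via Fubini, using the hypothesis $\partial^2\varphi/\partial u_j^2\equiv0$ to guarantee that each successive $\psi^{(j-1)}$ is affine in $u_j$ and that the top mixed derivative is constant. The only cosmetic difference is that you verify the one-variable inequality by a layer-cake/sublevel-set comparison, whereas the paper argues directly by comparing the supremum of $F(|au|)$ outside $[-c,c]$ with its infimum inside; both are elementary and equivalent in content.
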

\begin{proof}
In one dimension, the key inequality is that
\[ \int_{-c}^c F(|a u + b|) du \leq \int_{-c}^c F(|au|) du \]
which is a simple consequence of monotonicity: if $a = 0$ the appeal to monotonicity is direct.  Otherwise, we have
\[  \int_{-c}^c F(|a u + b|) du = \int_{-c + a^{-1} b}^{c + a^{-1} b} F(|au|) du. \]
If $b \neq 0$, then some positive fraction of the interval $[-c + a^{-1} b, c + a^{-1} b]$ lies outside $[-c,c]$.  However, the supremum of $F(|au|)$ outside $[-c,c]$ is less than the infimum of $F(|au|)$ inside $[-c,c]$ by monotonicity.  Thus the integral can only be made larger by shifting the support to be centered at the origin.  For $n > 1$, the lemma follows by Fubini and induction, applying this one-dimensional argument iteratively in each coordinate direction.
\end{proof}
To finish the proof of theorem \ref{mainthm}, we choose
\[ F(s) := \int_0^1 |s|^{-1 + \rho} \rho^{2n} d \rho. \]
Clearly $F$ is positive and decreasing on $[0,\infty)$.   Fix any $Q : \R^n \times \R^n \rightarrow \R^n$ bilinear and let $Q_u$ be the bilinear map
\[ Q_u(x,t) := Q(x,t) + \sum_{j=1}^n u_j x_j t_j \]
as defined in theorem \ref{mainthm}.  We will apply lemma \ref{rearrange} to $\varphi(u) := \det Q_u(\cdot , t)$.
It is easy to see that the second derivatives $\partial_{u_j}^2 \varphi(u)$ all vanish identically.  Consequently
\begin{align*}
 \int_{ [-1,1]^n} & F( |\det Q_u (\cdot, t)|) d u  \leq \int_{[-1,1]^n} F(| u_1 \cdots u_n t_1 \cdots t_n|) d u. \end{align*}
 If we integrate both sides of this inequality over $[-1,1]^n$ in the $t$ variables, then by Fubini we have
 \begin{align*}
  \int_{[-1,1]^n} & \left[ \int_{[-1,1]^n} F( |\det Q_u (\cdot,  t)|) dt \right] d u \\
& \leq \int_0^1 \rho^{2n} \left( \int_{-1}^1 |s|^{-1 + \rho} d s \right)^{2n} d \rho = 2^{2n}  < \infty. \end{align*}
By homogeneity (and rescaling as necessary), we conclude that
\begin{equation} \int_{[-1,1]^n} F( |\det Q_u (\cdot, t)|) dt < \infty  \label{finiteness}
\end{equation}
for almost every $u$ and any fixed $Q$.
We can bound $F$ from below as follows (splitting into cases $|s| \leq 1$, $|s| \geq 1$ and $\rho \leq 1- \theta$, $\rho \geq 1-\theta$):
\[ \frac{\min \{ (1-\theta)^{2n+1} , 1 - (1-\theta)^{2n+1} \}}{2n+1} |s|^{- \theta} \leq \int_0^1 \rho^{2n} |s|^{-1+\rho} d \rho = F(s) \]
for any $\theta \in (0,1)$ and any $s \in \R$. By Tchebyshev's inequality, for almost every choice of $u$ as described above we have that for every $\theta \in (0,1)$, there will be a constant $C_{u,\theta}$ such that \[ \left| \set{t \in [-1,1]^n}{ |\det Q_u (\cdot, t)| \leq \epsilon} \right|  \leq C_{u,\theta} \epsilon^{\theta} \]
uniformly for all $\epsilon > 0$.  By theorem \ref{pointwise}, the proof of theorem \ref{mainthm} is complete.

\bibliography{mybib}

\end{document}